\newcommand*{\doi}[1]{\href{http://dx.doi.org/\detokenize{#1}}{doi}}
\newtheorem{theorem}[equation]{Theorem}
\newtheorem{lemma}[equation]{Lemma}
\newtheorem*{lemma*}{Lemma}
\theoremstyle{remark}
\newtheorem{remark}[equation]{Remark}
\newcommand{\RR}{\mathbb{R}}
\newcommand{\Z}{\mathbb{Z}}
\newcommand{\N}{\mathbb{N}}
\newcommand{\LL}{\mathbb{L}}
\newcommand{\prob}{\mathbb{P}}
\newcommand{\class}[1]{\langle #1 \rangle}
\newcommand{\diam}{\mathsf{diam}}
\definecolor{darkgreen}{rgb}{0.0, 0.5, 0.0}
\newcommand{\marginnote}[1]{}
\let\marginnote\leo
\def\marginnote#1{\hspace*{0em}\marginpar{\vspace*{-1.3em}\raggedleft\tiny\textcolor{red}{reminder: #1}}}
\renewcommand{\le}{\leqslant}
\renewcommand{\ge}{\geqslant}
\renewcommand{\leq}{\leqslant}
\renewcommand{\geq}{\geqslant}
\begin{document}

\title[quasi-stationary distribution of contact process]{The quasi-stationary distribution \\ of the subcritical contact process}
\date{\today}

\author{Franco Arrejoría}
\address{Universidad de Buenos Aires and IMAS-CONICET, Buenos Aires, Argentina}
\email{farrejoria@dm.uba.ar}
\author[P. Groisman]{Pablo Groisman}
\address{Universidad de Buenos Aires and IMAS-CONICET, Buenos Aires, Argentina}
\email{pgroisma@dm.uba.ar}
\author{Leonardo T. Rolla}
\address{IMAS-CONICET and NYU-ECNU Institute of Mathematical Sciences at NYU Shanghai.}
\email{leorolla@dm.uba.ar}

\begin{abstract}
We show that the quasi-stationary distribution of the subcritical contact process on $\mathbb{Z}^d$ is unique. This is in contrast with other processes which also do not come down from infinity, like stable queues and Galton-Watson, and it seems to be the first such example.
\end{abstract}
\maketitle

\section{Introduction}

The contact process models the spread of a certain infection in a population, and it is among the most studied particle systems. The configuration at time $t$ is given by a subset $\eta_t \subseteq \Z^d$ that represents the set of infected individuals.
An infected individual infects each of its neighbors at some rate $\lambda>0$ and becomes healthy at rate 1.
This system undergoes the following phase transition.
There exists a critical $0<\lambda_c<\infty$ such that, for $\lambda > \lambda_c$ the process has an invariant distribution supported on configurations with infinitely many infected sites, while for $\lambda <\lambda_c$ the only stationary distribution is the one supported on the empty configuration $\eta = \emptyset$.
See~\cite{Liggett99} for detailed background.

In this article we focus on the latter regime, which is called \emph{subcritical}.
So every distribution is attracted to $\delta_\emptyset$.
Moreover, for every initial configuration with finitely many infected sites, the process a.s.\ dies out in finite time.
In the lack of a non-trivial stationary distribution, one studies the quasi-stationary behavior of the system.
Given a process $(\zeta_t)_{t \ge 0}$ which a.s.\ reaches an absorbing configuration $\emptyset$, we say that a distribution $\nu$ is a \emph{quasi-stationary distribution} (QSD) if the process starting from $\nu$ satisfies $\prob^\nu(\zeta_t \in \cdot \,|\, \zeta_t \ne \emptyset) = \nu$.
See~\cite{ColletMartinezMartin13, MeleardVillemonais12} for detailed background on QSDs.

The process $(\eta_t)_{t\ge 0}$ described above is too rigid to have a QSD.
When conditioning on the unlikely event that $\eta_t \ne \emptyset$, typically $\eta_t$ is not localized near $\eta_0$, although it is not large in size.
In this context, the natural object to study is the \emph{contact process modulo translations}.
We say that two non-empty finite configurations are equivalent if they are translations of each other.
Let $\Lambda$ denote the quotient space resulting from this equivalence relation, and let $\langle \eta \rangle$ denote the projection of a finite configuration $\eta$ onto $\Lambda \cup \{ \emptyset \}$.
The process $(\zeta_t)_{t \geq 0}$ given by $\zeta_t:=\langle \eta_t \rangle$ is also Markovian, and it is a.s.\ absorbed at $\emptyset$.

In contrast with stationary distributions, irreducible Markov processes can have none, one or infinitely many quasi-stationary distributions.
Moreover, there is no simple classification of these three cases.
Recent work relates existence and uniqueness of QSDs to the speed at which the process comes down from infinity~\cite{CattiauxColletLambertMartinezMeleardSanMartin09, ChampagnatVillemonais16,ChampagnatVillemonais17, BansayeMeleardRichard16}.

Existence of a QSD for subcritical contact process has been shown in different papers, but the question of uniqueness remained open.
In~\cite{FerrariKestenMartinez96} it is proved that a discrete-time version of the process has a unique \emph{minimal} QSD, that is, a QSD whose mean absorption time is minimal among all the QSDs.
In~\cite{AndjelEzannoGroismanRolla15} this result was adapted to continuous time.
In~\cite{SturmSwart14} it is proved that there is a unique QSD under which the expected number of infected sites is finite.
The QSDs $\nu^*$ found in~\cite{SturmSwart14} and~\cite{AndjelEzannoGroismanRolla15} are the same as they both satisfy the Yaglom limit
$\prob^{\zeta_0} \big( \zeta_t \in \cdot\, \big| \zeta_t \neq \emptyset \big) \to \nu^*$
for arbitrary deterministic initial configuration $\zeta_0$.

Another important example of process with a.s.\ absorption is the subcritical Galton-Watson process.
This is the first class of processes for which convergence of the state at large times conditioned on non-absorption has been proved~\cite{Yaglom47}, and it is considered as a cornerstone of this theory.
Under the usual moment conditions on the offspring distribution, subcritical Galton-Watson processes have a unique QSD with finite mean, which is also the unique minimal QSD, but they have infinitely many QSDs with infinite mean and larger absorption time~\cite{SenetaVere-Jones66}.

This indicates that partial uniqueness results from~\cite{FerrariKestenMartinez96,SturmSwart14,AndjelEzannoGroismanRolla15} do not quite imply that subcritical contact process does not have other QSDs.
We also note that subcritical Galton-Watson process and subcritical contact process come down from infinity at comparable speeds.
This can be seen through the drift for the total number of (infected) individuals, which for large configurations is negative and essentially proportional to this number.

In this paper we prove that the subcritical contact process modulo translations has a unique QSD (Theorem~\ref{thm:main}).
In particular, uniqueness of the QSD is not determined by the drift of the process at infinity, and spatial constraints play a role as well (see Theorem~\ref{thm:diam}).

We finally mention that, for birth-and-death processes and a certain class of one-dimensional diffusions, it has been shown that uniqueness of the QSD is equivalent to coming down from infinity in finite time~\cite{BansayeMeleardRichard16,CattiauxColletLambertMartinezMeleardSanMartin09}.
To the best of our knowledge, subcritical contact process is the first example of a process which has a unique QSD and does not come down from infinity in finite time.

In Section~\ref{prelimiaries}, we give a precise definition of the process and describe a graphical construction.
We also recall some properties and tools for the contact process modulo translations.
In Section~\ref{uniqueness}, we prove uniqueness of the QSD for the subcritical contact process modulo translations.

\pagebreak[4]

\section{Preliminaries and main tools}\label{prelimiaries}

\subsection{Graphical representation }\label{graphical_construction}

Define the lattice $\LL^d = \Z^d + \{ \pm \frac{1}{3}e_i : i=1,\ldots,d \} $ and let $U$ be a Poisson point process in $\RR^d \times \RR$ with intensity given by $\big( \sum_{y\in \Z^d} \delta_y + \sum_{y\in \LL^d} \lambda \delta_y \big)\times \mathrm{d}t $.
Notice that $U\subseteq (\Z^d \cup \LL^d ) \times \RR$.
Let $\prob$ denote the underlying probability measure.
For nearest neighbors $x,y \in \Z^d$ we write $U^{x,y}= \{ t \in \RR:(x+\frac{y-x}{3},t) \in U \} $ and $U^x= \{ t \in\RR : (x,t) \in U\}.$ For $t\in U^{x,y}$ we say that there is an \emph{infection arrow} from $x$ to $y$ at time $t$.

Given two space-time points $(y,s)$ and $(x,t)$, we define a \emph{path from} $(y,s)$ \emph{to} $(x,t)$ as a finite sequence $(x_0,t_0),\ldots, (x_k,t_k)$ with $x_0= y, x_k=x, s=t_0\leq t_1 \leq \cdots \leq t_k=t$ with the following property.
For each $i=1,\ldots, k$, the $i$-th segment $[(x_{i-1}, t_{i-1}), (x_i,t_i)]$ is \emph{vertical}, that is, $x_i=x_{i-1}$, if $i$ is odd, or \emph{horizontal}, that is, $|| x_i-x_{i-1} ||_{1}= 1$ and $t_i=t_{i-1}$, if $i$ is even.
Horizontal segments are also referred to as \emph{jumps}.
If all horizontal segments satisfy $t_i=t_{i-1}\in U^{x_{i-1},x_i}$ then such path is also called a $\lambda$\emph{-path}.
If, in addition, all vertical segments satisfy $(t_{i-1},t_i]\cap U^{x_i}=\emptyset$ we call it an \emph{open path} from $(y,s)$ to $(x,t)$.
Existence of an open path from $(y,s)$ to $(x,t)$ is denoted by $(y,s)\leadsto (x,t)$.
For two sets $C,D\subseteq \Z^d\times \RR$, we use $C \leadsto D$ to denote the event that $(y,s) \leadsto (x,t)$ for some $(y,s) \in C, (x,t) \in D$.
We denote by $L_t$ the set $\Z^d\times \{ t \} $.

\subsection{Definition of the processes}

Given $0 \leq s \leq t$ and $A \subseteq \Z^d$, define the set of sites at time $t$ reachable from $A$ at time $s$ by
\[
\mathscr{C}_{s,t}^A = \big\{ z\in\Z^d : (x,s) \leadsto (z,t) \text{ for some } x \in A \big\}
.
\]
For $\eta_0\subseteq \Z^d$ we define 
\begin{align}
\eta_{t} = \mathscr{C}_{0,t}^{\eta_0}
, \quad t \geq 0.
\label{contactprocess}
\end{align}
We use $(\eta_t)_{t\geq 0}$ for the process defined in (\ref{contactprocess}), so it is the contact process with parameter $\lambda$ and initial configuration $\eta_0$.
By enlarging the probability space we can suppose that $\eta_0$ is random, distributed as $\nu$ and independent of $\omega$.
We denote the corresponding probability space by $\prob^\nu$, or $\prob^{\eta_0}$ if $\nu$ is supported on a deterministic $\eta_0 \subseteq \Z^d$.

Considering the lexicographic order on $\Z^d$, which is a translation invariant total order, every $\zeta \in \Lambda$ has a canonical representative $\eta \subseteq \Z^d$ by asking $\mathbf{0} \in \eta$ to be its minimal element.
From now on we identify configurations and measures on $\Lambda$ with their images under the choice of canonical representatives.
The process $(\zeta_t)_{t \geq 0}$ given by $\zeta_t:=\langle \eta_t \rangle $ is the \emph{contact process modulo translations} with initial condition $\zeta_0 = \langle \eta_0 \rangle$.
Both processes have the Markov property and $\emptyset$ as an absorbing state.

It is known that there exits a QSD $\nu^*$ on $\Lambda$ such that,
for every $\zeta_0$ and $\zeta \in \Lambda$ fixed,
\begin{align}
\label{eq:yaglom}
\prob^{\zeta_0}
( \zeta_t = \zeta | \tau>t ) \to \nu^*(\zeta)
\quad
\text{ as } t \to \infty
.
\end{align}
See for instance~\cite[Proposition~3.2]{AndjelEzannoGroismanRolla15}.
Moreover, by the Markov property the absorption time $\tau := \inf \{ t\geq 0 : \zeta_t = \emptyset \}$ satisfies 
\(
\prob^{\nu^*} (\tau > t) = e^{-\alpha t}
\label{absortion_rate}
\)
for some absorption rate $\alpha>0$.

\subsection{Good points}
\label{good_points}

In the following, the letter $\beta$ denotes an arbitrary number that will be enlarged throughout the proofs.
The product $\beta t$ means $\lfloor \beta t \rfloor $.
We say that the space-time point $(z,s)$ is a \emph{good point} if every $\lambda$-path
starting from $(z,s)$ makes fewer than $\beta t$ jumps during $[ s, s+t ] $, and we denote $G_z^s$ 
the corresponding event.
The definition of good point depends on $\beta$ and $t$ but
we omit them in the notation.
For $I\subseteq [0,+\infty)$, we let $G^I(A)$ be the event that $G^s_z$ occurs for all $s\in I$ and $z \in A$.
Defining the sets $B_r^y=\{ x \in \RR^d : \|x-y \|_\infty\leq r \}$ and $D^y_r=B_{r}^y \backslash B_{r-1}^y$, we also consider the event $\hat{G}_z^s = G^s(D^z_{2 R_t})$ that the boundary of a large square centered at $z$ is good at time $s$, and also the event $\tilde{G}_z^s = {G}_z^s \cap \hat{G}_z^s = G^s(D^z_{2 R_t} \cup \{z\})$.
Given $\rho <\infty$,
for $\beta$ and $t$ large enough one has~\cite[Lemma~2.10]{AndjelEzannoGroismanRolla15}
\begin{equation}
\label{eq:zerogood}
\prob (G_\mathbf{0}^0) \geq 1 -e^{-\rho t }
.
\end{equation}

\section{Uniqueness of the quasi-stationary distribution}
\label{uniqueness}

In this section we prove the following.

\begin{theorem}
\label{thm:main}
The subcritical contact process modulo translations has a unique QSD.
\end{theorem}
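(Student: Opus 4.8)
The plan is to show that any QSD $\nu$ of the contact process modulo translations must coincide with the minimal/Yaglom QSD $\nu^*$ from~\eqref{eq:yaglom}. Since a QSD $\nu$ satisfies $\prob^\nu(\zeta_t \in \cdot \mid \tau > t) = \nu$ for all $t$, it suffices to prove that, for \emph{every} initial distribution --- in particular $\nu$ itself --- one has $\prob^{\zeta_0}(\zeta_t \in \cdot \mid \tau > t) \to \nu^*$. The difficulty is that the Yaglom limit~\eqref{eq:yaglom} is stated only for deterministic $\zeta_0$, and a QSD with very heavy tails (infinite mean, or even infinite expected diameter) could a priori put mass on arbitrarily large configurations, so one cannot simply integrate~\eqref{eq:yaglom} against $\nu$ and swap limit and integral. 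This is exactly the phenomenon that produces the extra QSDs for subcritical Galton--Watson.

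First I would set up the comparison: let $\nu$ be an arbitrary QSD with absorption rate $\alpha_\nu$, i.e.\ $\prob^\nu(\tau > t) = e^{-\alpha_\nu t}$, and recall $\prob^{\nu^*}(\tau > t) = e^{-\alpha t}$ with $\alpha$ minimal among QSDs, so $\alpha_\nu \ge \alpha$. The core of the argument should be a \emph{uniform} lower bound on the probability that a large configuration, conditioned to survive up to time $t$, has by that time ``forgotten'' its size and looks like a sample from $\nu^*$ up to translation. Concretely, I would use the graphical construction and the good-point estimate~\eqref{eq:zerogood}: if $(z,s)$ is good, every $\lambda$-path from it makes few jumps in time $t$, which confines the influence of any single infected site; combined with the fact that the process dies out quickly, one expects that conditioned on $\{\tau > t\}$ the surviving mass typically descends from a \emph{single} small cluster, whose evolution is then governed by the deterministic-start Yaglom limit. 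The key step is to make this ``single surviving ancestor'' heuristic quantitative and uniform over the (arbitrarily large) initial configuration --- that is, to show that for every $\varepsilon > 0$ there is $t_0$ such that for all $t \ge t_0$ and \emph{all} finite $\eta_0$, $\| \prob^{\langle \eta_0\rangle}(\zeta_t \in \cdot \mid \tau > t) - \nu^* \|_{\mathrm{TV}} < \varepsilon$. Given such uniformity, integrating over $\eta_0 \sim \nu$ and using $\prob^\nu(\tau>t) = \int \prob^{\langle\eta_0\rangle}(\tau > t)\, \nu(\mathrm{d}\eta_0)$ yields $\prob^\nu(\zeta_t \in \cdot \mid \tau > t) \to \nu^*$, hence $\nu = \nu^*$ by quasi-stationarity.

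The uniformity step is where the subcriticality and the spatial structure must be exploited together, and I expect it to be the main obstacle. The tools I would reach for: (i) the exponential decay of $\prob^{\eta_0}(\tau > t)$ in $t$ with a rate that is, crucially, \emph{bounded below uniformly in $\eta_0$} for $t$ large --- this rate cannot exceed the size-dependent rates but the point is that survival is dominated by the rare event of one cluster persisting; (ii) a decoupling/partitioning argument in which $\eta_0$ is split into pieces separated by distances $\gtrsim R_t$, so that on the good event each piece evolves essentially independently and the chance that two distinct pieces \emph{both} have surviving descendants at time $t$ is negligible compared to the chance that exactly one does; and (iii) once we condition on survival descending from a single piece, a further application of~\eqref{eq:zerogood}-type localization to say that this piece, at an intermediate time $\delta t$, is itself a \emph{bounded} configuration with overwhelming probability, after which the deterministic Yaglom limit~\eqref{eq:yaglom} --- uniform over a \emph{compact} family of initial configurations --- finishes the job. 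Handling configurations that are genuinely infinite in extent but finite in size (countable subsets with no upper bound on diameter) may require an additional truncation: approximate $\nu$ by its restriction to configurations of diameter $\le N$, control the error via $\prob^\nu(\diam(\zeta_0) > N) \to 0$, and check that the diameter under $\nu$ is a.s.\ finite --- which would also connect to Theorem~\ref{thm:diam} referenced in the introduction.
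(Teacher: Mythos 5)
Your overall strategy (show that any QSD $\nu$ must equal $\nu^*$ by controlling $\prob^{\zeta_0}(\zeta_t\in\cdot\mid\tau>t)$ uniformly in $\zeta_0$ and then integrating against $\nu$) is the right frame, but the pivotal intermediate claim you rely on is false: there is \emph{no} $t_0$ such that $\|\prob^{\langle\eta_0\rangle}(\zeta_t\in\cdot\mid\tau>t)-\nu^*\|_{\mathrm{TV}}<\varepsilon$ for all finite $\eta_0$. Take $\eta_0$ to be a union of $M$ singletons at enormous mutual distances, with $M$ of order $1/\prob^{\{\mathbf 0\}}(\tau>t)\sim e^{\alpha t}$. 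The survival events of the clusters are essentially independent, so conditioned on $\tau>t$ the number of surviving clusters is roughly a Poisson random variable conditioned to be positive with mean of order one; with probability bounded away from zero at least two widely separated clusters survive, and then $\class{\eta_t}$ has huge diameter and several components, an event of vanishing $\nu^*$-probability. Thus the total variation distance stays bounded away from $0$ no matter how large $t$ is. This is exactly why your step (ii) breaks down: ``two distinct pieces both surviving is negligible'' is only true when the number of pieces is $o(e^{\alpha t})$, and since a priori the QSD $\nu$ could have arbitrarily heavy tails, you need uniformity over \emph{all} $\eta_0$, which fails. The truncation fallback in your last step does not repair this, because conditioning on $\{\tau>t\}$ reweights mass towards the large configurations: $\nu(\diam(\zeta_0)>N)$ small does not make the conditional contribution of those configurations small --- this reweighting is precisely the mechanism that produces the infinitely many heavy-tailed QSDs for subcritical Galton--Watson, so any argument that ignores it cannot distinguish the two situations.

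The correct substitute is a weaker uniform statement, which is the actual content of Theorem~\ref{thm:diam}: uniformly in $\zeta_0$, and for each \emph{fixed} finite $\zeta$,
\[
\prob^{\zeta_0}\big(\zeta_t=\zeta\,\big|\,\tau>t\big)\;-\;\nu^*(\zeta)\,\prob^{\zeta_0}\big(\diam(\zeta_t)<R_t\,\big|\,\tau>t\big)\;\to\;0 ,
\]
with $R_t=e^{\sqrt t}$. The correction factor $\prob^{\zeta_0}(\diam(\zeta_t)<R_t\mid\tau>t)$ is \emph{not} uniformly close to $1$ (in the many-clusters example above it is not), and that is the whole point: the uniform control is only on the mass given to each fixed small configuration, relative to the event that the surviving state has small diameter; the proof goes through a single cut break point $(Y,S)$ for the first surviving ancestor, good-point localization, and an independence factorization between the box around $(Y,S)$ and its complement --- close in spirit to your ``single surviving ancestor'' heuristic, but never claiming TV closeness to $\nu^*$. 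Uniqueness then follows not from uniform convergence but from the quasi-stationarity of $\nu$ itself: $\prob^\nu(\zeta_t=\zeta\mid\tau>t)=\nu(\zeta)$ and $\prob^\nu(\diam(\zeta_t)<R_t\mid\tau>t)=\nu(\{\diam<R_t\})\to1$ because $\nu$ is a fixed probability measure on finite configurations and $R_t\to\infty$, whence $\nu(\zeta)=\nu^*(\zeta)$. Without a statement of this corrected form, the gap in your argument is essential rather than technical.
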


The core of the proof of Theorem~\ref{thm:main} consists in letting the spatial constraint of the model manifest itself in terms of the diameter of $\zeta_t$, denoted $\diam (\zeta_t)$.
Going back to the limit~\eqref{eq:yaglom}, note that such limit is for $\zeta_0$ fixed, and it is of course not uniform on $\zeta_0$.
Indeed, for each fixed $t$ the approximation will break down if we choose $\zeta_0$ large.
In order to prove uniqueness, we extend~\eqref{eq:yaglom} to the following limit.

\begin{theorem}
\label{thm:diam}
Let $R_t = e^{\sqrt{t}}$.
For every configuration $\zeta$,
\begin{align*}
\prob^{\zeta_0} \big( \zeta_t= \zeta \,\big|\, \tau > t \big) \, - \,
\nu^*(\zeta)
\times
\prob^{\zeta_0} \big( \diam (\zeta_t) < R_t\,\big|\, \tau > t \big)
\to 0
\end{align*}
as $t \to \infty$,
uniformly on $\zeta_0$.
\end{theorem}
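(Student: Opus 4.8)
The plan is to exploit the finite speed of propagation of the contact process, via the good points of Section~\ref{good_points}, together with its subcritical exponential decay, so as to reduce the statement to the Yaglom limit~\eqref{eq:yaglom} for initial configurations of \emph{bounded} diameter, for which that limit is automatically uniform because $\{\zeta\in\Lambda:\diam(\zeta)\le D\}$ is finite for every $D$. First I would reformulate the claim: whenever $R_t>\diam(\zeta)$ one has $\{\zeta_t=\zeta\}\subseteq\{\diam(\zeta_t)<R_t\}$, so the quantity in the statement equals $\prob^{\zeta_0}(\diam(\zeta_t)<R_t\mid\tau>t)$ times $\big(\prob^{\zeta_0}(\zeta_t=\zeta\mid\tau>t,\,\diam(\zeta_t)<R_t)-\nu^*(\zeta)\big)$, and as the first factor is at most $1$ it is enough to show that, uniformly in $\zeta_0$,
\[
\frac{\prob^{\zeta_0}\big(\eta_t\in\langle\zeta\rangle\big)}{\prob^{\zeta_0}\big(\eta_t\neq\emptyset,\ \diam(\eta_t)<R_t\big)}\ \longrightarrow\ \nu^*(\zeta)
\qquad\text{as }t\to\infty,
\]
where $\langle\zeta\rangle\subseteq\Z^d$ denotes the translation class of $\zeta$. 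By monotonicity $\prob^{\zeta_0}(\eta_t\neq\emptyset)\ge\prob^{\{\mathbf{0}\}}(\eta_t\neq\emptyset)$, which is of order $e^{-\alpha t}$, and this is the scale on which all the errors below are measured.

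Next I would localise. Fixing $\rho$ large in~\eqref{eq:zerogood} --- hence $\beta$ large --- every space-time point is, with probability at least $1-e^{-\rho t}$, such that every open path of duration at most $t$ issued from it moves a distance at most $\beta t$; thus $\eta_t^{\{x\}}\subseteq B(x,\beta t)$, the union $\eta_t=\bigsqcup_{x\in\eta_0}\eta_t^{\{x\}}$ is disjoint, and lineages from sites more than $2\beta t$ apart depend on disjoint, hence independent, portions of the graphical construction. Using this I would expand numerator and denominator as sums over the position of a single surviving clump: if $\eta_t\in\langle\zeta\rangle$, then $\eta_t$ is a translate $\zeta+v$ with $v$ within $\beta t$ of $\zeta_0$, only the sites of $\zeta_0$ within $O(\beta t)$ of $\zeta+v$ affect it, and all remaining sites of $\zeta_0$ must have died out by time $t$; separating the latter into an intermediate shell and an independent far part, $\prob^{\zeta_0}(\eta_t\in\langle\zeta\rangle)$ becomes a sum over candidate positions whose generic summand is the probability that a bounded-diameter piece of $\zeta_0$ produces exactly $\zeta+v$, times the probability that the far part dies out, plus an error controlled by~\eqref{eq:zerogood}; the analogous expansion of the denominator has the same shape with ``produces $\zeta+v$'' replaced by ``is non-empty and has diameter below $R_t$''.

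The decisive input is then subcriticality: surviving up to time $t$ costs a factor of order $e^{-\alpha t}$ per independent clump, whereas the constraint $\diam(\eta_t)<R_t$ confines all surviving clumps to a single window of side $R_t+2\beta t$, which contains at most $(2R_t)^{d}$ lattice sites. Hence the probability that two $2\beta t$-separated clumps both survive while jointly staying within diameter $R_t$ is at most $(2R_t)^{2d}\big(\prob^{\{\mathbf{0}\}}(\eta_t\neq\emptyset)\big)^{2}=e^{o(t)}e^{-2\alpha t}$, which is negligible against the $e^{-\alpha t}$-order single-clump term; the same two-far-apart-survivors estimate shows that a single clump surviving while spread out to diameter $\ge D$ contributes negligibly once $D$ is a large constant. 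After discarding these contributions, the pieces of $\zeta_0$ that matter in numerator and denominator have bounded diameter, the far-part survival factors match term by term and cancel in the ratio, and one is left to compare $\prob^{A}(\eta_t\in\langle\zeta\rangle)$ with $\nu^*(\zeta)\,\prob^{A}(\eta_t\neq\emptyset)$ for $A$ of bounded diameter; this is exactly~\eqref{eq:yaglom}, which is uniform over the finite collection of such $A$ modulo translation. The choice $R_t=e^{\sqrt t}$ is precisely what makes $R_t\to\infty$ --- so that eventually $R_t>\diam(\zeta)$ --- while $\log R_t=o(t)$, the latter being what forces $(2R_t)^{d}e^{-\alpha t}\to0$ and $(2R_t)^{d}e^{-\rho t}\to0$ in the two estimates above.

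The step I expect to be the main obstacle is carrying out this localisation--decoupling \emph{uniformly} in the arbitrarily large configuration $\zeta_0$. The good-points estimate~\eqref{eq:zerogood} can only be afforded over regions of not-too-large volume, so one cannot declare a full neighbourhood of $\zeta_0$ to be good; the decomposition has to be organised outcome by outcome, tracking the independence between the bounded ``near'' piece, the polynomially-sized ``shell'', and the possibly enormous ``far'' part, and dealing with the fact that a single cluster of $\zeta_0$ at scale $\beta t$ may itself be spatially extended and must be broken at the finer scale dictated by the diameter constraint. Checking that every resulting error --- bad good-points events, two or more surviving clumps, an anomalously spread-out survivor, and the boundary effects between the three regions --- is $o(1)$ relative to the denominator, uniformly in $\zeta_0$, is where essentially all of the work lies, and is what the sub-exponential growth of $R_t$ is tuned to permit.
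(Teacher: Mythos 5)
There is a genuine gap, and it sits exactly where you locate ``essentially all of the work'': the error accounting is done against the wrong scale. Your reduction replaces the difference in the theorem by the ratio $\prob^{\zeta_0}(\eta_t\in\langle\zeta\rangle)/\prob^{\zeta_0}(\eta_t\neq\emptyset,\,\diam(\eta_t)<R_t)$, so every discarded event (bad good-points, two surviving clumps in a window, boundary effects) must be negligible relative to that denominator --- not relative to $\prob^{\{\mathbf{0}\}}(\eta_t\neq\emptyset)\asymp e^{-\alpha t}$, which is what you actually bound. Monotonicity gives a lower bound only for $\prob^{\zeta_0}(\eta_t\neq\emptyset)$; the event $\{\eta_t\neq\emptyset,\ \diam(\eta_t)<R_t\}$ is not monotone, and its probability is \emph{not} uniformly of order $e^{-\alpha t}$. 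Take $\zeta_0$ to consist of $N=e^{2\alpha t}$ sites mutually at distance much larger than $R_t$: then having small diameter forces all but one group to die, and the probability is roughly $Ne^{-\alpha t}(1-ce^{-\alpha t})^{N-1}$, which is super-exponentially small, while your error terms ($R_t^{2d}e^{-2\alpha t}$, $\mathrm{poly}\cdot e^{-\rho t}$) are only exponentially small and therefore swamp it. To make the errors carry the same ``far part dies'' factor as the main term you must work conditionally, which is precisely what your sketch does not provide a mechanism for. A second, independent gap is the reduction to ancestral pieces of \emph{bounded} diameter: the two-far-apart-survivors estimate only kills configurations with two surviving clumps separated by more than $2\beta t$; a single ancestral piece of $\zeta_0$ of diameter between a constant $D$ and order $\beta t$ (or even up to $R_t$) needs only one survivor, contributes at the leading order $e^{-\alpha t}$, and cannot be discarded. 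So you are not left with a finite family of initial configurations, and the ``Yaglom is uniform because the set is finite'' step collapses --- you would need \eqref{eq:yaglom} uniformly over initial pieces whose diameter grows with $t$, which is essentially the statement being proved.

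The paper's proof is built to avoid both problems, and it does so by a device absent from your proposal: the cut break point. Conditionally on $\tau>t$ (whose probability is at least of order $e^{-ct}$, so doubly-exponentially small errors such as $(1-e^{-t})^{e^{2t}}$ are harmless), with high conditional probability the entire surviving cluster descends from a single space-time point $(Y,S)$ with $S\le t/2$ whose $2R_t$-neighbourhood at time $S$ is empty and good (Lemma~\ref{lemma:goodAndSoon}). Decomposing over $H_{x,y,s}$ and factorizing the graphical construction over $E_{y,s}$ and its complement, the inside gives $\nu^*(\zeta)$ by applying \eqref{eq:yaglom} \emph{from a single site}, uniformly in $\eta_0$ because the post-renewal law no longer depends on $\eta_0$, while the outside produces the factor $\prob^{\eta_0}(\diam(\eta_t)<R_t\mid\cdot)$, which is carried along multiplicatively and never needs a lower bound --- the theorem is stated as a difference, not a ratio, exactly so that the regime where the diameter probability is tiny is trivial. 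If you want to salvage your route, you would have to reorganize all estimates conditionally on $\tau>t$ and find a substitute for the renewal-at-a-single-site step; at that point you have reconstructed the paper's argument.
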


\begin{remark}
The choice of $R_t = e^{\sqrt{t}}$ is rather arbitrary.
The proof given below works for any sequence $t \ll R_t \ll e^{\frac{\alpha - \varepsilon}{d}t}$. It can be refined to work for $1 \ll R_t \ll e^{\frac{\alpha}{d}t}$.
Theorem~\ref{thm:diam} is false outside this range.
\end{remark}

\begin{proof}
[Proof of Theorem~\ref{thm:main}]

Let $\nu$ be a QSD and $\zeta \in \Lambda$. By Theorem~\ref{thm:diam},
\[
\prob^{\nu} \big( \zeta_t= \zeta \,\big|\, \tau > t \big) \, - \,
\nu^*(\zeta)
\times
\prob^{\nu} \big( \diam (\zeta_t) < R_t \,\big|\, \tau > t \big)
\to 0.
\]
As $\nu$ is QSD, we have 
$\prob^{\nu} \big( \zeta_t= \zeta \,\big|\, \tau > t \big) =\nu(\zeta)$
and
\[
\prob^{\nu} \big( \diam (\zeta_t) < R_t \,\big|\, \tau > t \big)
= \nu(\{\zeta: \diam(\zeta)< R_t\})
\to 1.
\] 
Hence, $\nu(\zeta)=\nu^*(\zeta).$
\end{proof}

In the remainder of this section we prove Theorem~\ref{thm:diam}.

\smallskip

We use the notation
\[
f(\eta_0,t) \approx g(\eta_0,t)
\ \ \Leftrightarrow \ \
\limsup_{t\to\infty} \sup_{\eta_0} \big|f(\eta_0,t)-g(\eta_0,t)\big| = 0
,
\]
so Theorem~\ref{thm:diam} becomes
\begin{align}
\label{eq:approx}
\prob^{\eta_0}\big(\langle \eta_t \rangle = \zeta \,\big|\, \tau>t \big)
\approx
\nu^*(\zeta)
\times
\prob^{\eta_0} \big( \diam (\eta_t) < R_t\,\big|\, \tau > t \big)
.
\end{align}

To show~\eqref{eq:approx} we now introduce the notion of cut break point.
It is almost the same as the one introduced in~\cite{AndjelEzannoGroismanRolla15, DeshayesRolla17}.
A similar idea appeared previously in~\cite{Kuczek89} in the context of supercritical oriented percolation.

A space-time point $(y,s)$ is called a \emph{break point} if, for every $x \in B_{2R_t}^y \backslash \{ y \} $, $L_0 \not\leadsto (x,s)$.
If $\eta_t \ne \emptyset$, define
\[
X = \min \{ x \in \eta_0 : (x,0) \leadsto L_t \} \in \Z^d
\]
as the first site (for lexicographic order $\preccurlyeq$) whose infection survives up to time $t$.
We say that $(z,s)\in \Z^d \times \N$ is a \emph{cut point for $x$} if $\mathscr{C}^x_{0,s} = \{z\}$.
If it is also a break point, we call it a \emph{cut break point}.
Let $(Y,S) \in \Z^d \times \N$ be the cut break point for $x$ that appears first in time, and let $S=\infty$ if $\eta_t = \emptyset$.

\begin{lemma}
\label{lemma:goodAndSoon}
We have
\begin{equation}
\label{eq:hascutpoint}
\prob^{\eta_0}\big( S \leq \tfrac{t}{2} \,\big|\, \tau>t \big) \approx 1
\end{equation}
and, for $\beta$ large enough,
\begin{equation}
\label{eq:goodwhp}
\prob^{\eta_0}\big( \tilde{G}_S^Y \,\big|\, \tau>t \big) \approx 1
.
\end{equation}
\end{lemma}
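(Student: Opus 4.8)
The strategy is to first establish~\eqref{eq:hascutpoint} by showing that, conditionally on survival, the surviving lineage must pass through a cut break point early in time with probability tending to $1$ uniformly in $\eta_0$, and then to derive~\eqref{eq:goodwhp} from the already-established abundance of cut break points combined with the goodness estimate~\eqref{eq:zerogood}.

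\emph{Step 1: a cut break point appears by time $t/2$.} The key point is that, on $\{\tau > t\}$, the infection at time $t$ descends from the single site $X \in \eta_0$, and the space-time region ``above'' $(X,0)$ carries an open path to $L_t$. Within that region, a cut break point for $X$ is a fresh space-time point $(z,s)$ which (i) is the unique site reachable from $X$ at time $s$ and (ii) has no other site of a large box around it reachable from the whole of $L_0$. The plan is to show that the probability of \emph{not} seeing such a point during $[0, t/2]$ is small, uniformly in $\eta_0$, by a renewal/regeneration argument: one runs along the surviving lineage and, at a sequence of stopping times, attempts to produce a cut break point using only the local Poisson data in a bounded space-time window; each attempt succeeds with probability bounded below (here one uses subcriticality to guarantee that the process reachable from $L_0$ near the lineage is, with good probability, localized, so the ``break'' part of the condition is met), and the attempts in disjoint windows are independent. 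Since the surviving lineage survives for time $t$, there are order $t$ such attempts during $[0, t/2]$, so a geometric bound gives failure probability $\le e^{-c t}$. Dividing by $\prob^{\eta_0}(\tau > t) \ge \prob^{\nu^*}(\tau>t) = e^{-\alpha t}$ is \emph{not} immediately enough — this is precisely where uniformity in $\eta_0$ is delicate, since $\prob^{\eta_0}(\tau > t)$ can be much smaller than $e^{-\alpha t}$ when $\eta_0$ is tiny (e.g.\ a single site). The resolution is to not condition globally but to run the regeneration argument \emph{after} the process has survived a short initial time: once $\zeta_{t_0} \ne \emptyset$ for a small but fixed $t_0$ (or rather $t_0 = \varepsilon t$), the conditional law of the configuration has enough mass on ``reasonable'' configurations that the regeneration succeeds at rate comparable to $\alpha$, and the overshoot cost is then only $e^{-\alpha \varepsilon t}$ against a failure probability $e^{-ct}$ with $c$ independent of $\varepsilon$; choosing $\varepsilon$ small makes the ratio vanish. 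This interplay — survival cost versus regeneration gain, made uniform in $\eta_0$ — is the main obstacle.

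\emph{Step 2: the first cut break point is good.} Given~\eqref{eq:hascutpoint}, we know $(Y,S)$ exists with $S \le t/2$ with conditional probability $\approx 1$. To upgrade to $\tilde G_S^Y$, recall $\tilde G_S^Y = G^S(D^Y_{2R_t} \cup \{Y\})$, i.e.\ goodness of $Y$ and of the boundary shell of the box $B^Y_{2R_t}$, all at time $S$. By translation invariance of the graphical construction and~\eqref{eq:zerogood}, each individual space-time point $(z,s)$ is good with probability $\ge 1 - e^{-\rho t}$ for $\beta$ large; a union bound over the $\le C R_t^{d-1} = C e^{(d-1)\sqrt t}$ points in $D^Y_{2R_t}\cup\{Y\}$ — which holds uniformly over the location of $(Y,S)$ once we take $\rho > d$ — shows that \emph{any given} space-time point has its shell good except with probability $e^{-(\rho - d)\sqrt{t}}\cdot e^{-(\rho-d)(t - \sqrt t)}$, or more simply $\le e^{-\rho' t}$ for suitable $\rho'$. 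The subtlety is that $(Y,S)$ is random and depends on the Poisson data, so one cannot directly apply the estimate at $(Y,S)$. Here one uses the defining property of a break point: on the event that $(Y,S)$ is a break point, the box $B^Y_{2R_t}$ at time $S$ is \emph{not} reachable from $L_0$ except at the single site $Y$, which decouples the relevant portion of the graphical representation inside $B^Y_{2R_t}$ from the past; thus the goodness events $G^S_z$ for $z \in D^Y_{2R_t}$ depend only on Poisson marks strictly above time $S$ in that box, which are independent of everything used to determine $(Y,S)$. Conditioning on $(Y,S) = (y,s)$, the event $\hat G_s^y$ then has probability $\ge 1 - e^{-\rho' t}$ by the union bound, uniformly; summing over the possible values of $(Y,S)$ and dividing by the survival probability (handled as in Step~1 via the $\varepsilon t$-delay trick) yields~\eqref{eq:goodwhp}. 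A small additional point: $G^S_Y$ itself is about $\lambda$-paths \emph{from} $(Y,S)$, which again live above time $S$ and so are covered by the same independence; one includes $\{Y\}$ in the shell union bound and is done.
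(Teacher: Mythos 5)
Your plan does not close the two places where the real work of Lemma~\ref{lemma:goodAndSoon} lies. For~\eqref{eq:hascutpoint}, the heart of the matter is a quantitative estimate: the probability of surviving to time $t$ while producing no cut break point before $t/2$ must be negligible compared with $\prob^{\eta_0}(\tau>t)$, uniformly in $\eta_0$. Your regeneration sketch asserts this but does not deliver it. The success of an ``attempt'' requires $\mathscr{C}^{X}_{0,s}$ to collapse to a singleton, and that probability is not bounded below uniformly (the cluster at the attempt time has unbounded size); it is only controlled under the conditioning on $\tau>t$, under which your independence-across-windows claim fails. Moreover the ``break'' half of the condition involves reachability from all of $L_0$ in a box of radius $2R_t=2e^{\sqrt t}$, so it is neither local nor independent between attempts. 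The $\varepsilon t$-delay trick does not repair the uniformity issue either: what must be compared are exponential rates, so if the failure rate $c$ of the regeneration scheme were below the decay rate of $\prob^{\eta_0}(\tau>t)$ no delay helps, and if $c$ can be made larger no delay is needed. (Also, the inequality $\prob^{\eta_0}(\tau>t)\ge\prob^{\nu^*}(\tau>t)$ you invoke is unjustified — by attractiveness it goes the other way when $\eta_0$ is a single site; the uniform bound the paper actually uses is the elementary $\prob^{\eta_0}(\tau>t)\ge e^{-t}$.) The paper does not reprove this estimate at all: it reduces~\eqref{eq:hascutpoint} to Lemma~3.4 of \cite{DeshayesRolla17} and~\eqref{eq:goodwhp} to Lemma~2.8 of \cite{AndjelEzannoGroismanRolla15}, and its only new ingredient is the remark that on $\{\tau>t\}$ the site $X$ lies among the first $\lfloor e^{2t}\rfloor$ points of $\eta_0$ except on an event of probability at most $e^{-e^t+1}$, which is negligible even after dividing by $e^{-t}$; this cutoff is exactly what makes the subsequent union bounds uniform in $\eta_0$, and it is absent from your plan.

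For~\eqref{eq:goodwhp}, your decoupling claim is false as stated. The events $G^{S}_z$ do depend only on marks above time $S$, but $(Y,S)$ is \emph{not} determined by marks below time $S$: the site $X$ is defined through $\{(x,0)\leadsto L_t\}$, and the conditioning on $\tau>t$ also reads marks above time $S$ inside $B^{Y}_{2R_t}$; the break-point property says nothing about marks after time $S$, so ``independent of everything used to determine $(Y,S)$'' does not hold. The route that works (the one the paper inherits from \cite{AndjelEzannoGroismanRolla15}) is a union bound over deterministic triples $(x,y,s)$ with $x\in\llbracket\eta_0\rrbracket$, $s\le t/2$ and $y$ in a controlled range of $x$, bounding $\prob^{\eta_0}\big(X=x,\,Y=y,\,S=s,\,\tau>t,\,(\tilde G^{s}_{y})^{c}\big)$ by roughly $|D^{y}_{2R_t}|\,e^{-\rho t}$ with $\rho$ arbitrarily large via~\eqref{eq:zerogood}, so that the entropy factors and the conditioning factor $e^{t}$ are absorbed. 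Without the $e^{2t}$-cutoff on $X$ and a control of the location of $Y$ given $X$, your ``sum over the possible values of $(Y,S)$'' runs over an unbounded, $\eta_0$-dependent set and cannot be closed uniformly in $\eta_0$.
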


\begin{proof}
Let $\llbracket A \rrbracket$ denote the first $\lfloor e^{2t} \rfloor$ points of $A \subseteq \Z^d$ (if $A$ has fewer than $e^{2t}$ points, take $\llbracket A \rrbracket = A$). Then,
\begin{equation}
\nonumber
\prob^{\eta_0} ( X \not\in \llbracket \eta_0 \rrbracket, \tau>t) \leq (1-e^{-t})^{e^{2t}-1}
\leq
e^{-e^t+1},
\end{equation}
similar to the proof Lemma~2.8 in~\cite{AndjelEzannoGroismanRolla15} which was proved for $d=1$.
With the above remark, the proof of~\eqref{eq:goodwhp} is the same as that of Lemma~2.8 in~\cite{AndjelEzannoGroismanRolla15}.
Now the proof of~\eqref{eq:hascutpoint} is the same as that of Lemma~3.4 in~\cite{DeshayesRolla17}.
The latter proof was also conditioning on $\eta_t \cap B_{R_t} \ne \emptyset$ which can be replaced by the above remark.
\end{proof}

Let $\mathbf{P}$ denote the probability function on $\Z^d \times \Z^d \times \N$ given by
\[
\mathbf{P}(x,y,s) = \prob^{\eta_0}\big( X=x, Y=y, S=s, \tilde{G}^s_y \,\big|\, \tilde{G}^S_Y, S<\tfrac{t}{2} \big),
\]
and $H_{x,y,s} = \{X=x, Y=y, S=s\} \cap \tilde{G}^s_y$.

The proof of Theorem~\ref{thm:diam} reduces to the following.
\begin{align*}
\prob^{\eta_0}& \big( \class{\eta_t} = \zeta \,\big|\, \tau > t \big)
\approx
\prob^{\eta_0}\big( \class{\eta_t} = \zeta \,\big|\, \tilde{G}^S_Y, S<\tfrac{t}{2} \big)
\\
& =
\int 
\prob^{\eta_0}\big( \class{\eta_t} = \zeta \,\big|\, H_{x,y,s} \big)\, \mathrm{d} \mathbf{P}
\\
& \overset{(*)}{=}
\int \prob^{\eta_0}\big( \class{\mathscr{C}_{s,t}^y} = \zeta , \diam(\eta_t)<R_t \,\big|\, H_{x,y,s} \big)\,\mathrm{d} \mathbf{P}
\\
& \overset{(\star)}{=}
\int
\prob\big( \class{\mathscr{C}_{s,t}^y} = \zeta \,\big|\, (y,s) \leadsto L_t, G_y^s \big)
\,
\prob^{\eta_0}\big( \diam(\eta_t)<R_t \,\big|\, H_{x,y,s} \big)
\,\mathrm{d} \mathbf{P}
\\
& \approx
\nu^*(\zeta)
\times
\int \prob^{\eta_0}\big( \diam(\eta_t)<R_t \,\big|\, H_{x,y,s} \big)\,  \mathrm{d} \mathbf{P}
\\
& =
\nu^*(\zeta)
\times
\prob^{\eta_0}\big( \diam(\eta_t)<R_t \,\big|\, \tilde{G}^S_Y, S<\tfrac{t}{2} \big)
\\
& \approx
\nu^*(\zeta)
\times
\prob^{\eta_0}\big( \diam(\eta_t)<R_t \,\big|\, \tau>t \big).
\end{align*}
In the above chain, the first and last equalities are just decompositions, and two of the $\approx$'s follow directly from Lemma~\ref{lemma:goodAndSoon}.
To justify the other $\approx$, note that~\eqref{eq:yaglom} and Lemma~\ref{lemma:goodAndSoon} imply that
\begin{equation}
\nonumber
\sup_{r \ge t/2}
\Big|
\prob^{\{\mathbf{0}\}}\big( \class{\eta_r} = \zeta \,\big|\, (\mathbf{0},0) \leadsto L_r , G_\mathbf{0}^0 \big)
-
\nu^*(\zeta)
\Big|
\approx 0
,
\end{equation}
which translated by $(y,s)$ gives the desired limit.

To conclude the proof, we still need to justify
that
$\overset{(*)}{=}$
and
$\overset{(\star)}{=}$
hold for all large $t$ -- large enough so that $R_t > 2 \beta t > \diam(\zeta)$.

To see why $\overset{(*)}{=}$ holds, note that on the event $H_{x,y,s}$, the only sites that can be infected at time $t$ are those in $B^y_{\beta t}$ and those in $(B^y_{2R_t-\beta t-1})^c$.
Moreover, the set of infected sites in the inner region is non-empty and equals $\mathscr{C}^y_{s,t}$.
If there are no infected sites in the outer region, then $\eta_t = \mathscr{C}^y_{s,t}$ and $\diam(\eta_t) \le 2 \beta t < R_t$, so in this case we have $\class{\eta_t} = \zeta$ if and only if $\class{\mathscr{C}^y_{s,t}} = \zeta$.
If there are infected sites in the outer region then, since $2R_t - \beta t > R_t + \beta t$, we have $\diam(\eta_t) > R_t > \diam(\zeta)$ and thus $\class{\eta_t} \ne \zeta$.

To prove $\overset{(\star)}{=}$ we first recall from the previous paragraph that, on the event $H_{x,y,x}$, $\diam(\eta_t)<R_t$ is equivalent to $\eta_t \cap (B^y_{2R_t-\beta t-1})^c = \emptyset$.
We now show that the latter event is conditionally independent of $\mathscr{C}^y_{s,t}$ given $H_{x,y,s}$.
The argument was introduced in~\cite{Ezanno12,AndjelEzannoGroismanRolla15} for $d=1$ and generalized to higher dimensions in~\cite{DeshayesRolla17}.
We reproduce it here for clarity and convenience.

The main observation is that $H_{x,y,s}$ can be written as the intersection of events which depend on disjoint regions, with the property that the first region determines the set $\mathscr{C}^y_{s,t}$ and the second determines whether $\eta_t \cap (B^y_{2R_t-\beta t-1})^c = \emptyset$ or not.
Denote $\eta_0^{\prec x} = \{z \in \eta_0 : z \preccurlyeq x, \, z\ne x\}$.

The first part of this main observation is that $H_{x,y,s}$ occurs if and only if all the events below occur:
\begin{align*}
&
G_y^s
,\\ &
(y,s) \leadsto L_t
,\\ &
\hat G_y^s
,\\ &
(x,0) \leadsto (y,s)
,\\ &
\eta_0^{\prec x} \times \{0\} \not \leadsto (y,s)
,\\ &
\eta_0^{\prec x} \times \{0\} \not \leadsto L_t
,\\ &
(x,0) \not \leadsto L_s \setminus \{(y,s)\}
,\\ &
L_0 \not\leadsto (B^y_{2 R_t} \setminus \{y\}) \times \{s\}
,\\ &
J_{x,y,s},
\end{align*}
where $J_{x,y,s}$ is the event that, for all $s'\in \{1,2,\dots,s-1\}$, either there are more than one $z\in \Z^d$ such that $(x,0) \leadsto (z,s')$, or there is a unique such $z$ but
$L_0 \mapsto (B^z_{2R_t}\setminus\{z\})\times\{s'\}$.

The 2nd, 4th and 6th events ensure that $X=x$.
The 7th and 8th events ensure that $(y,s)$ is a cut break point for $x$.
The 9th event ensures that $(y,s)$ is the earliest cut break point for $x$, which together with the event that $X=x$ ensure that $Y=y$ and $S=s$.
The 5th event is redundant once we have the 2nd and 6th, but having it listed will be helpful in the next part of the argument.

The second part of the main observation mentioned above is the following.
Let $E_{y,s} = \{(z,r) \in \RR^d \times [0,\infty): \|z-y\| \le \beta t + 1, r > s\}$.
Then the simultaneous occurrence of the 1st and 2nd events above is determined by $\omega \cap E_{y,s}$.
Moreover, when these events occur, $\mathscr{C}_{s,t}^y$ is also determined by $\omega \cap E_{y,s}$.
On the other hand, simultaneous occurrence of the 3rd--9th events above is determined by $\omega \cap E_{y,s}^c$.
Moreover, on the occurrence of these events, $\eta_t \cap (B^y_{2R_t-\beta t-1})^c$ is also determined by $\omega \cap E_{y,s}^c$.

Finally, as already argued above, on the occurrence of $H_{x,y,s}$ the events $\diam(\eta_t)<R_t$ and $\eta_t \cap D^y_{2R_t-\beta t} = \emptyset$ are equivalent.
Since $\omega \cap E_{y,s}$ and $\omega \cap E_{y,s}^c$ are independent, we can factorize the probability as in $\overset{(\star)}{=}$ for each $x,y \in \Z^d$ and $s=1,2,\dots,\lfloor \frac{t}{2}\rfloor$.
This concludes the proof of Theorem~\ref{thm:diam}.

\bibliographystyle{bib/leo}
\bibliography{bib/leo}

\end{document}